\documentclass{article}
\usepackage{graphicx,amsfonts,amsmath,mathrsfs,amssymb,amsthm,url,color,tikz}
 \usepackage{fancyhdr,indentfirst,bm,enumerate,natbib, float,subfigure}
  \usepackage[colorlinks=true,citecolor=blue]{hyperref}
   \usepackage{dsfont}
\usetikzlibrary{arrows.meta,calc}

\def\Fbar{ {\overline F}}

  \def\oo{\infty}                   \def\d{\,\mathrm{d}}

 \newcommand{\essinf}{\mathrm{ess\mbox{-}inf}}

    \newcommand{\E}{\mathbb{E}}
     \newcommand{\R}{\mathbb{R}}
      
       \renewcommand{\P}{\mathbb{P}}
  
\renewcommand{\(}{\left (}
 \renewcommand{\)}{\right )}
  \renewcommand{\[}{\left [}
   \renewcommand{\]}{\right ]}
\newtheorem{theorem}{Theorem}[section]
 
  \newtheorem{lemma}[theorem]{Lemma}
   \newtheorem{proposition}[theorem]{Proposition}
    \newtheorem{definition}{Definition}[section]
     \newtheorem{example}[theorem]{Example}

       \newtheorem{conjecture}[theorem]{Conjecture}

\numberwithin{equation}{section}
 \numberwithin{theorem}{section}

\renewcommand{\cite}{\citet}
 \allowdisplaybreaks
  
\begin{document}

\title{Separation Theorems for Increasing Convex and Weak 3-Convex Orders on Real-Valued Random Variables}

\author{
 Yuting Su\thanks{School of Management, University of Science and Technology of China,  China.
 Email: \url{syt20020224@mail.ustc.edu.cn}}
 \and  Meng Guan\thanks{School of Management, University of Science and Technology of China, China.
Email: \url{gm123123@mail.ustc.edu.cn}}
\and Zhenfeng Zou\thanks{School of Public Policy and Management, University of Science and Technology of China,  China. Email: \url{zfzou@ustc.edu.cn}}
 \and  Taizhong Hu\thanks{School of Management, University of Science and Technology of China,  China. 
 Email: \url{thu@ustc.edu.cn}}      	}

\date{\today}

\maketitle

\begin{abstract}

We examine separation theorems for the increasing convex (stop-loss) order and weak 3-convex order. We improve prior proofs for the increasing convex order by using integrated survival functions. We extend the separation theorem for weak 3-convex order from non-negative random variables to general real-valued random variables with finite second moments via a constructive approach based on higher-order integrated survival functions. A counterexample verifies that the $3$-convex order and $2$-convex order chain does not hold for the weak $3$-convex order. Our results enrich stochastic order theories for actuarial science and risk management.
\medskip
		
\noindent \textbf{MSC2000 subject classification}: 60E15, 91B30

\noindent \textbf{Keywords}: Stochastic order; Increasing convex (stop-loss) order; Weak 3-convex order; Separation theorem; Integrated survival function
\end{abstract}

\section{Introduction}

Stochastic orders are fundamental tools in probability, actuarial science, mathematical economics and risk theory, which provide systematic ways to compare random variables according to their magnitude, variability and risk characteristics. Among various classes of stochastic order relations, convex order and increasing convex order (stop-loss order) attract extensive attention, as they are closely linked to risk aversion, risk dispersion and optimal decision-making problems. A well-known family of generalized orders, the weak $n$-convex orders and $n$-convex orders, further extend the classical framework to higher degrees, where different moment constraints and partial moment conditions are imposed to characterize higher-order risk changes.

A pivotal result for increasing convex order is the separation theorem, which decomposes an increasing convex order relation into a combination of two simpler stochastic orders. Two equivalent decompositions are widely recognized: one via the usual stochastic order followed by convex order, and the other via convex order followed by usual stochastic order. Early proofs and discussions of this theorem can be found in \cite{Mak94} and \cite{Mul96}. 

\cite{Mak94} established relevant results via the martingale representation of increasing convex orders. \cite{Mul96} discussed both decomposition forms of the separation theorem for non-negative random variables using integrated survival functions. However, when extending his method to general random variables, it only works for the first decomposition (Theorem \ref{th-260204} (1)), and cannot be applied to the second one (Theorem \ref{th-260204} (2)). Accordingly, one objective of this paper is to provide a rigorous proof for the second decomposition using integrated survival functions.

Recently, \cite{DLM14} established a separation theorem for the weak $3$-convex order, demonstrating that any weak 3-convex order shift can be decomposed into a $2$-convex order followed by a $3$-convex order. However, their construction and proof are confined to non-negative random variables only, which restricts the applicability of their results to general probabilistic models involving real-valued random variables. Furthermore, when extending the result to the weak $n$-convex order for $n\ge 4$, their proof for Proposition 4.1 relies on stationary-excess operators and Khinchine representations \citep[see, for example,][Theorem 1.3]{DJ88}. Since the invertibility and order-preserving properties of these tools are not fully justified, the argument contains notable gaps.

Motivated by the above limitations, this paper aims to generalize the separation theorem for weak $3$-convex order to the class of general real-valued random variables with finite second moments. We adopt a constructive approach using integrated survival functions, which avoids the problematic operator techniques in previous work. We first revisit the definitions and basic integral properties of higher-order integrated survival functions, as well as fundamental facts about convex and weak convex orders. Then we revisit the proofs for the separation theorem of increasing convex order.
After that, we establish the separation theorem for weak $3$-convex order over all real-valued random variables by dividing discussions into two cases based on the essential infimum of distributions. Furthermore, we construct an explicit counterexample to demonstrate that the reverse decomposition of the weak 3-convex order does not hold, which clarifies the one-sided nature of this separation property.

The rest of this paper is organized as follows. Section \ref{sect-2} collects necessary preliminaries, including the recursive definition of higher-order integrated survival functions, related integral identities, and formal definitions of $n$-convex and weak $n$-convex orders. Section \ref{sect-3} revisits and rigorously proves the separation theorem for classical increasing convex order via integrated survival functions. Section \ref{sect-4} is devoted to the main result of this paper: the separation theorem for weak $3$-convex order for real-valued random variables, together with a counterexample for the reverse direction.

Throughout, ``increasing'' and ``decreasing'' mean ``nondecreasing'' and ``nonincreasing'', respectively. All integrals and expectations are implicitly assumed to exist whenever they are written.

\section{Preliminaries}
\label{sect-2}

\subsection{Higher-degree integrated survival function}

\newpage

For $r\ge 0$, let $L^r$ be the set of all random variables on an atomless probability space $(\Omega, \mathscr{F}, \P)$ with finite $r$-th moment. For $X\in L^0$ with distribution function $F_X$, set $\Fbar_X^{[1]}(x)=\Fbar_X(x)$, and define $n$-th integrated survial function $\Fbar^{[n]}_X$ recursively by
\begin{equation}
 \label{eq-260206-1}
     \Fbar^{[n]}_X (x) = \int_x^\oo \Fbar_X^{[n-1]} (t) \d t,\quad x\in \R,\ n\ge 2.
\end{equation}
A similar argument to the proof of Proposition 1 in \cite{OR01} yields that
\begin{align}
 \label{eq-260206-2}
 \Fbar_X^{[n]}(x) & = \frac {1}{(n-1)!}\, \int^\oo_x (t-x)^{n-1} \d t 
    = \frac {1}{(n-1)!}\, \E [(X-x)_+^{n-1}],\quad  x\in\R,
\end{align}
where $a_+=\max\{0, a\}$ and $a_+^n=(a_+)^n$. From \eqref{eq-260206-2}, $\Fbar_X^{[n]}(x)$ is finite for any $x\in\R$ when $X\in L^{n-1}$ and $n\ge 1$.

Note that, for $n\ge 3$,
$$
  \Fbar_X^{[n-1]}(x) -\frac {1}{(n-2)!} \E\[(X-x)^{n-2}\] = \frac {(-1)^{n-1}}{(n-2)!} \E\[(X-x)_-^{n-2}\],
$$
which goes to zero as $x\to -\oo$. Here, $a_- =\max\{0, -a\}$ and $a_-^n=(a_-)^n$. Thus, $\E\[(X-x)^{n-2}\]/(n-2)!$ is the \emph{left asymtote} of $\Fbar^{[n-1]}_X(x)$ as $x\to -\oo$.

\begin{proposition}
  \label{pr-260201}
For $X\in L^{n-1}$ with $n\ge 2$, we have
$$
    \int_{-\oo}^0 \(\Fbar_X^{[n-1]}(u) -\frac {1}{(n-2)!} \E \[(X-u)^{n-2}\] \) \d u=\frac{(-1)^{n-1}}{(n-1)!} \E [X^{n-1}_-].
$$
\end{proposition}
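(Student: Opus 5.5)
Our plan is to rewrite the integrand with the identity stated just before the proposition and then apply Tonelli. For $n\ge 3$ that identity gives, for every $u\in\R$,
$$
\Fbar_X^{[n-1]}(u)-\frac{1}{(n-2)!}\E\[(X-u)^{n-2}\] = \frac{(-1)^{n-1}}{(n-2)!}\E\[(X-u)_-^{n-2}\].
$$
It follows from \eqref{eq-260206-2} by splitting $(X-u)^{n-2}=(X-u)_+^{n-2}+(-1)^{n-2}(X-u)_-^{n-2}$. Note that $(X-u)_-=(u-X)_+$.

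The left-hand side of the proposition therefore equals
$$
\frac{(-1)^{n-1}}{(n-2)!}\int_{-\oo}^0 \E\[(u-X)_+^{n-2}\]\d u .
$$
The integrand is nonnegative, so Tonelli's theorem lets us exchange the integral and the expectation:
$$
\int_{-\oo}^0 \E\[(u-X)_+^{n-2}\]\d u=\E\[\int_{-\oo}^0 (u-X)_+^{n-2}\d u\].
$$
For fixed $X$, the inner integral vanishes when $X\ge 0$. When $X<0$ it equals
$$
\int_X^0 (u-X)^{n-2}\d u = \frac{(-X)^{n-1}}{n-1}=\frac{X_-^{n-1}}{n-1}.
$$
Hence the expression equals $\E[X_-^{n-1}]/(n-1)$. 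Multiplying by $(-1)^{n-1}/(n-2)!$ gives exactly $\frac{(-1)^{n-1}}{(n-1)!}\E[X_-^{n-1}]$, which is finite because $X\in L^{n-1}$.

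The case $n=2$ needs a convention, and this is the step that requires the most care. Here $\Fbar_X^{[1]}=\Fbar_X$, and $(X-u)^0$ should be read as $1$, so the integrand is $\Fbar_X(u)-1=-F_X(u)$. Its negative is $F_X(u)=\E[\mathds{1}_{\{X\le u\}}]$. The same Tonelli computation gives
$$
\int_{-\oo}^0 F_X(u)\d u=\E\[\int_{-\oo}^0 \mathds{1}_{\{X\le u\}}\d u\]=\E[X_-],
$$
so the left-hand side equals $-\E[X_-]$, which agrees with the formula. This is consistent with the general identity if $(X-u)_-^0$ is interpreted as $\mathds{1}_{\{X\le u\}}$; the boundary event $\{X=u\}$ has Lebesgue measure zero in $u$ for each fixed $X$, so the convention does not affect the integral.
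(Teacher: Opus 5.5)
Your proof is correct, but it takes a different route from the paper. The paper does not use the pointwise identity for the integrand. Instead it integrates over $[t,0]$ with antiderivatives, writing $\int_t^0 \Fbar_X^{[n-1]}(u)\d u=\Fbar_X^{[n]}(t)-\Fbar_X^{[n]}(0)$ and $\int_t^0 \E[(X-u)^{n-2}]\d u=\frac{1}{n-1}\big(\E[(X-t)^{n-1}]-\E[X^{n-1}]\big)$. It then splits $(X-t)^{n-1}=(X-t)_+^{n-1}+(-1)^{n-1}(X-t)_-^{n-1}$, which gives $\frac{(-1)^{n-1}}{(n-1)!}\big(\E[X_-^{n-1}]-\E[(X-t)_-^{n-1}]\big)$, and lets $t\to-\oo$. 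The vanishing of $\E[(X-t)_-^{n-1}]$ is a dominated-convergence step, using $(X-t)_-\le X_-$ for $t\le 0$, which the paper leaves implicit.

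The two routes trade off as follows:
\begin{itemize}
\item \emph{Case $n=2$:} the paper only ever splits a power with exponent $n-1\ge 1$, so it covers $n=2$ uniformly. It needs no special reading of $(X-u)_-^0$, whereas you need a separate case and a convention.
\item \emph{Integrability:} your argument shows the integrand has the constant sign $(-1)^{n-1}$. Hence the left-hand side is an absolutely convergent Lebesgue integral, not merely a limit of $\int_t^0$. The paper has to supply this by hand later, in the proof of Theorem~\ref{th-260207}, where it invokes $\Fbar_X^{[2]}(u)+u\ge 0$ to get $\int_{-\oo}^t|\Fbar_X^{[2]}(u)+u|\d u\to 0$.
\item \emph{Simplicity:} Tonelli replaces the limiting argument by a one-line exact computation.
\end{itemize}
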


\noindent \emph{Proof}.\ Since $(X-t)^{n-1}= (X-t)_+^{n-1} + (-1)^{n-1} (X-t)_-^{n-1}$ for $t\in\R$, we obtain 
\begin{align*}
   & \lim_{t\to -\oo} \int_{t}^0 \(\Fbar_X^{[n-1]}(u) -\frac {1}{(n-2)!} \E \[(X-u)^{n-2}\] \) \d u \\
   & \qquad =\lim_{t\to -\oo} \(\Fbar^{[n]}_X(t) -\Fbar^{[n]}_X(0) +\frac{1}{(n-1)!} \Big(\E [X^{n-1}] -\E [ (X-t)^{n-1}] \Big )\) \\
 & \qquad =\lim_{t\to -\oo} \frac {1}{(n-1)!} \Big \{\E [(X-t)_+^{n-1}] -\E [X_+^{n-1}] + \E [X^{n-1}] -\E [ (X-t)^{n-1}]\Big \}  \\[3pt]
   & \qquad =\lim_{t\to -\oo} \frac{(-1)^{n-1}}{(n-1)!} \Big \{\E [X^{n-1}_-] -\E [(X-t)_{-}^{n-1}]\Big\} \\[3pt]
   & \qquad = \frac{(-1)^{n-1}}{(n-1)!} \E [X^{n-1}_-].   
\end{align*}
Thus, the desired result follows.\ \qed

\begin{proposition}
  \label{pr-260202}
Let $X, Y\in L^{n-1}$ with $\E [X^k]=\E [Y^k]$ for $k\in [n-2]$. Then
$$
    \int_{-\oo}^\oo\(\Fbar^{[n-1]}_Y(x)-\Fbar^{[n-1]}_X(x)\)\d x 
    =\frac {1}{(n-1)!} \big ( \E [Y^{n-1}] -\E [X^{n-1}]\big ).
$$    
\end{proposition}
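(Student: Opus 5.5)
The plan is to split the integral at $0$ and treat the negative half-line with Proposition \ref{pr-260201}, after subtracting the left asymptotes of the two integrated survival functions. The moment assumption is what makes those asymptotes cancel. Indeed, $\frac{1}{(n-2)!}\E[(X-u)^{n-2}]$ is a polynomial in $u$ whose coefficients involve only $\E[X^k]$ for $k\le n-2$. Since $\E[X^k]=\E[Y^k]$ for $k\in[n-2]$ (and trivially for $k=0$), we get
$$
\E[(X-u)^{n-2}]=\E[(Y-u)^{n-2}],\qquad u\in\R .
$$
Hence on $(-\oo,0]$ the integrand can be rewritten as
$$
\Fbar^{[n-1]}_Y(u)-\Fbar^{[n-1]}_X(u)=\Big(\Fbar^{[n-1]}_Y(u)-\tfrac{1}{(n-2)!}\E[(Y-u)^{n-2}]\Big)-\Big(\Fbar^{[n-1]}_X(u)-\tfrac{1}{(n-2)!}\E[(X-u)^{n-2}]\Big).
$$

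Next I handle the two half-lines separately.

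\textbf{Negative half-line.} Applying Proposition \ref{pr-260201} to each bracket gives
$$
\int_{-\oo}^0\big(\Fbar^{[n-1]}_Y-\Fbar^{[n-1]}_X\big)\d u=\frac{(-1)^{n-1}}{(n-1)!}\big(\E[Y_-^{n-1}]-\E[X_-^{n-1}]\big).
$$
This integral is understood as the limit of $\int_t^0$ as $t\to-\oo$, which is exactly how Proposition \ref{pr-260201} is proved.

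\textbf{Positive half-line.} By the recursion \eqref{eq-260206-1} and the representation \eqref{eq-260206-2},
$$
\int_0^\oo \Fbar^{[n-1]}_X(u)\d u=\Fbar^{[n]}_X(0)=\frac{1}{(n-1)!}\E[X_+^{n-1}],
$$
which is finite because $X\in L^{n-1}$. The same holds for $Y$.

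Adding the two pieces and using $x^{n-1}=x_+^{n-1}+(-1)^{n-1}x_-^{n-1}$ for both $X$ and $Y$ yields $\frac{1}{(n-1)!}\big(\E[Y^{n-1}]-\E[X^{n-1}]\big)$, as claimed.

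The main point needing care is the meaning of $\int_{-\oo}^\oo$. The individual functions $\Fbar^{[n-1]}_X$ grow polynomially as $u\to-\oo$, so only their difference is integrable there. I will therefore read $\int_{-\oo}^\oo$ as $\lim_{t\to-\oo}\int_t^\oo$, exactly as in the proof of Proposition \ref{pr-260201}. For $n=2$ the moment condition is vacuous and the asymptote is the constant $1$, so the identity reduces to $\int(\Fbar_Y-\Fbar_X)\d x=\E[Y]-\E[X]$; this fits the same scheme, with the convention $[0]=\emptyset$.
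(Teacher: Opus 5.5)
Your proposal is correct and follows the paper's proof essentially step for step. Both split at $0$ and use the moment condition to cancel the common left asymptote $\frac{1}{(n-2)!}\E[(X-u)^{n-2}]$. Both then apply Proposition \ref{pr-260201} on $(-\oo,0]$, use $\Fbar^{[n]}(0)=\E[(\cdot)_+^{n-1}]/(n-1)!$ on $[0,\oo)$, and recombine via $x^{n-1}=x_+^{n-1}+(-1)^{n-1}x_-^{n-1}$.

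Your reading of $\int_{-\oo}^\oo$ as an improper integral is a reasonable clarification that the paper leaves implicit. By Tonelli the difference is in fact Lebesgue integrable on $(-\oo,0]$, since $\int_{-\oo}^0 \E[(X-u)_-^{n-2}]\d u=\E[X_-^{n-1}]/(n-1)<\oo$.
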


\noindent \emph{Proof}.\ By Proposition \ref{pr-260201}, it follows from \eqref{eq-260206-2} that  
\begin{align*}
 & (n-1)!\int_{-\oo}^\oo\(\Fbar^{[n-1]}_Y(x)-\Fbar^{[n-1]}_X(x)\)\d x \\[3pt]
 & \qquad =(n-1)!\int_{-\oo}^0\(\Fbar^{[n-1]}_Y(x)-\Fbar^{[n-1]}_X(x)\)\d x + \E [Y_+^{n-1}]-\E [X_+^{n-1}] \\[3pt]
  & \qquad =(n-1)!\int_{-\oo}^0\(\Fbar^{[n-1]}_Y(x)-\frac {1}{(n-2)!} \E \[(Y-u)^{n-2}\] \)\d x + \E [Y_+^{n-1}] \\[3pt]
  & \qquad \qquad +(n-1)!\int_{-\oo}^0\(\frac {1}{(n-2)!} \E\[(X-u)^{n-2}\]-\Fbar^{[n-1]}_X(x)\)\d x -\E [X_+^{n-1}]\\[3pt]
  & \qquad = (-1)^{n-1} \E [Y_-^{n-1}] + \E [Y_+^{n-1}] -(-1)^{n-1}\E [X_-^{n-1}] -\E [X_+^{n-1}] \\
  & \qquad = \E [Y^{n-1}] -\E [X^{n-1}].  
\end{align*}
This proves the proposition.\ \qed

\subsection{Higher-degree convex and weak convex orders}

We first recall from \cite{DLM14} two families of higher-order stochastic orders: $n$-convex orders and weak $n$-convex orders, which generalize classical convex-type orders to arbitrary integer degrees $n\ge 2$. 

\begin{definition}
Let $X, Y\in L^{n-1}$ for $n\ge 2$. We say that $X$ is smaller than $Y$ in the weak $n$-convex order, written $X\le_{n\hbox{-}{\rm wcx}} Y$, if $\E [X^k] = \E [Y^k]$ for $k=1, \ldots, n-2$, and 
$$
    \Fbar^{[n]}_X (x)  \le  \Fbar^{[n]}_Y (x),\quad x\in\R,
$$
or equivalently, 
$$
     \E \[(X-x)_+^{n-1}\] \le \E \[(Y-x)_+^{n-1}\],\quad x\in\R.
$$
\end{definition}

For the weak $n$-convex order, the equality of the $(n-1)$-th moment is not required, which relaxes the moment restrictions of the strict $n$-convex order.

\begin{definition}
Let $X, Y\in L^{n-1}$ for $n\ge 2$. We say that $X$ is smaller than $Y$ in the $n$-convex order, written $X\le_{n\hbox{-}{\rm cx}} Y$, if $X\le_{n\hbox{-}{\rm wcx}} Y$ and $\E [X^{n-1}] =\E [Y^{n-1}]$.
\end{definition}

In short, the $n$-convex order is a strengthened version of the weak $n$-convex order by additionally imposing the equality of the $(n-1)$-th moment. The inclusion relation below follows directly from the definitions:
$$ 
   X\le_{n\hbox{-}{\rm cx}} Y\ \Longrightarrow  X\le_{n\hbox{-}{\rm wcx}} Y.
$$

The $n$-convex and weak $n$-convex orders can be characterized via expected utility. A utility function $u$ is said to have $n$-th degree risk aversion if $(-1)^{n+1} u^{(n)}(x)\ge 0$ for all $x\in\R$.
\begin{itemize}
  \item[(1)] $X\le_{n\hbox{-}{\rm wcx}} Y$ if and only if $\E [u(X)]\le \E [u(Y)]$ for all $(n-2)$ times continuously differentiable function $u$ such that $u^{(n-2)}$ is increasing and convex.
  
  \item[(2)] $X\le_{n\hbox{-}{\rm cx}} Y$ if and only if $\E [u(X)]\le \E [u(Y)]$ for all $(n-2)$ times continuously differentiable function $u$ such that $u^{(n-2)}$ is convex.
\end{itemize}
These utility-based characterizations explain why higher convex-type orders are widely adopted to analyze portfolio choice, insurance demand and optimal risk decisions in economics and actuarial science.

For $n=2$, the $2$-convex and weak $2$-convex orders reduce to the classical convex order $\le_{\rm cx}$ and increasing convex order $\le_{\rm icx}$ (stop-loss order $\le_{\rm sl}$), respectively, which correspond to risk-averse and increasing risk-averse utility functions in standard risk theory. 

The following theorem gives well-known sample path characterizations of the orders $\le_{\rm cx}$ and $\le_{\rm icx}$. 
 
\begin{theorem}
 \label{th-260205}
{\rm \citep{Rus81,SS07}}\
\begin{itemize}
 \item[{\rm (1)}] For $X, Y\in L^1$, $X\le_{\rm cx} Y$ if and only if there exist two random variables $\widehat{X}$ and $\widehat{Y}$, defined on the same atomless probability space, such that $\widehat{X} \stackrel {d}{=} X$, $\widehat{Y} \stackrel {d}{=} Y$, and 
     $\E \big [\widehat{Y} | \widehat{X}\big ] = \widehat{X},\ {\rm a.s.}.$
 
 \item[{\rm (2)}] For $X, Y\in L^1$, $X\le_{\rm icx} Y$ if and only if there exist two random variables $\widehat{X}$ and $\widehat{Y}$, defined on the same atomless probability space, such that $\widehat{X} \stackrel {d}{=} X$, $\widehat{Y} \stackrel {d}{=} Y$, and 
     $\E \big [\widehat{Y} | \widehat{X}\big ] \ge  \widehat{X},\ {\rm a.s.}.$
\end{itemize}
\end{theorem}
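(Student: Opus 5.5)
The statement is Theorem \ref{th-260205}, the classical Strassen-type representation, quoted from \cite{Rus81,SS07}. I would prove it by reducing part (2) to part (1), and proving part (1) through a discrete approximation plus a compactness argument.

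For the easy direction of (1), suppose such $\widehat X,\widehat Y$ exist and let $\phi$ be convex. The conditional Jensen inequality gives $\E[\phi(\widehat Y)]=\E[\E[\phi(\widehat Y)\mid\widehat X]]\ge \E[\phi(\widehat X)]$, hence $X\le_{\rm cx}Y$. For (2), write $\E[\widehat Y\mid\widehat X]=\widehat X+Z$ with $Z\ge 0$. For increasing convex $\phi$, Jensen followed by monotonicity gives $\E[\phi(\widehat Y)]\ge\E[\phi(\widehat X+Z)]\ge\E[\phi(\widehat X)]$.

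The hard direction of (1) is the core. First assume $X$ takes finitely many values. I would use the integrated survival function $\Fbar^{[2]}$: the condition $X\le_{\rm cx}Y$ is equivalent to $\E X=\E Y$ together with $\Fbar^{[2]}_X\le\Fbar^{[2]}_Y$. Since $\Fbar^{[2]}_X$ is piecewise linear, one can then build a martingale kernel explicitly. A concrete route is the iterative "balayage" procedure: repeatedly replace an atom of $X$ by a two-point mean-preserving spread. Each step keeps the conditional mean, and the construction makes $\Fbar^{[2]}$ increase toward $\Fbar^{[2]}_Y$.

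An alternative, which I would actually commit to, is duality. Consider the set $\mathcal M$ of joint laws of $(X,\cdot)$ with martingale property and second marginal $\mu$. By Hahn–Banach, the law of $Y$ lies in this set as soon as it is dominated by $X$ in convex order; the needed dual statement is $\E\phi(Y)\ge\E\phi(X)$ for $\phi$ convex.

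For general $X$, I would approximate. Take $X_m=\E[X\mid\mathcal G_m]$ for finite partitions $\mathcal G_m$, so that $X_m\le_{\rm cx}X\le_{\rm cx}Y$. I would then obtain martingale couplings $(X_m,Y)$ and pass to the limit. The laws are tight because both marginals converge in $L^1$. The martingale condition $\E[(Y-X)h(X)]=0$ for bounded continuous $h$ is preserved under weak limits, thanks to the uniform integrability of $Y$ and of the $X_m$. This yields $\E[\widehat Y\mid\widehat X]=\widehat X$. Atomlessness of the space lets us realize the limit coupling.

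For the hard direction of (2), given $X\le_{\rm icx}Y$, I would first produce an intermediate $Z$ with $X\le_{\rm st}Z\le_{\rm cx}Y$. Such a $Z$ exists by the separation theorem (Theorem \ref{th-260204}(1)), or by a direct construction: truncate $X$ from above at the level where $\Fbar^{[2]}$ and the mean match. Apply (1) to $(Z,Y)$ and couple $X\le Z$ monotonically via quantile functions, $\widehat X=F_X^{-1}(U)\le \widehat Z=F_Z^{-1}(U)$. Then glue, choosing $\widehat Y$ conditionally on $\widehat Z$ independently of $U$ given $\widehat Z$; this needs the atomless space for extra randomization. The result is $\E[\widehat Y\mid\widehat X]=\E[\widehat Z\mid\widehat X]\ge\widehat X$.

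The main obstacle is the finite-support construction or duality step in (1); the limiting and gluing steps are routine measure theory.
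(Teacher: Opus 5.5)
Your reduction of (2) to (1) is correct, but it uses the \emph{other} separation theorem from the one the paper uses.

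\textbf{The paper's route.} The paper uses Theorem \ref{th-260204}(2), $X\le_{\rm cx}Z\le_{\rm st}Y$, whose proof is the main work of Section~3. It takes a martingale coupling $(\widehat X,\widehat Z)$, writes $\widehat Z=F_Z^{-1}(U_Z)$ via a distributional transform, and sets $\widehat Y=F_Y^{-1}(U_Z)$. Since the last step is a pointwise increase $\widehat Y\ge\widehat Z$, it gets $\E[\widehat Y\mid\widehat X]\ge\E[\widehat Z\mid\widehat X]=\widehat X$ with no conditional-independence bookkeeping.

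\textbf{Your route.} You use Theorem \ref{th-260204}(1), $X\le_{\rm st}Z\le_{\rm cx}Y$, which is the easy half. One can take $Z=\max(X,a)$ with $a$ chosen so that $\E\max(X,a)=\E Y$, or $Z=X$ if $\E X=\E Y$. This is censoring from \emph{below}; truncating from above, as you wrote, would give $Z\le_{\rm st}X$. The burden then moves to the gluing. This is exactly the route the paper calls non-rigorous in \cite{MR01}, because $\sigma(\widehat X)$ and $\sigma(\widehat Z)$ need not be nested.

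Your construction does repair that defect, and you should write out why, since it is the whole point. Both $\widehat X$ and $\widehat Z$ are functions of $U$. You set $\widehat Y=G(\widehat Z,V)$, with $V$ independent of $U$ and $G(z,\cdot)$ the quantile function of the martingale kernel of $(Z,Y)$. Then $\E[\widehat Y\mid U]=\widehat Z$ a.s. Because $\sigma(\widehat X)\subseteq\sigma(U)$, the tower property gives $\E[\widehat Y\mid\widehat X]=\E[\widehat Z\mid\widehat X]\ge\widehat X$.

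\textbf{Trade-off.} Your route is more elementary and bypasses Section~3 entirely, at the cost of an explicit Markov gluing with extra randomization. The paper's route needs only the distributional transform, but it rests on the harder decomposition.

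\textbf{Part (1).} The paper does not prove (1); it cites \citep{Rus81,SS07}. You lose nothing by doing the same, but your sketch should not be presented as a proof. The Hahn--Banach sentence is misformulated. The convex set to work with is the set of laws $\nu$ attainable as second marginals of martingale couplings started from the law of $X$, not a set of joint laws containing the law of $Y$. Also, the convex-order hypothesis is not the dual statement by itself. The substantive steps are:
\begin{itemize}
\item closedness of that set, which your tightness and uniform-integrability argument would supply;
\item the identity $\sup_\nu\int f\d\nu=\E[\hat f(X)]$, where $\hat f$ is the least concave majorant of $f$; this needs a measurable selection.
\end{itemize}
Only after these does $\E[f(Y)]\le\E[\hat f(Y)]\le\E[\hat f(X)]$ finish the argument. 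Similarly, finitely many two-point spreads reach only finitely supported $Y$, so the balayage route also needs a limiting step that you have not given.
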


Theorem \ref{th-260205} was entended by \cite{MR01}, in which the two step submartingale $(\widehat{X}, \widehat{Y})$ has the additional property that the conditional distribution of $ \big [\widehat{Y} | \widehat{X}=x\big ]$ is stochastically increasing in $x$. 

In the above definition of the convex order, we always assume that the random variables have finite means. For the definition of the convex order between two random variables with infinite mean, see \cite{CW26}. For a new characterization of the increasing convex order, see \cite{GHW24}.

\section{Separation theorem for the usual increasing convex order}
\label{sect-3}

For notational convenience, in this section we denote $\Psi_X =\Fbar_X^{[2]}$ for $X\in L^1$, which is also called the stop-loss transform of $X$ in actuarial science. We first recall some basic properties of $\Psi_X$ from \cite{MR01}, which are stated as the following two lemmas.

\begin{lemma}
  \label{le-260200} 
\begin{itemize}
 \item[{\rm (1)}] For $X\in L^1$, the integrated survival function $\Psi_X$ has the following properties: {\rm (i)}\ $\Psi_X$ is decreasing and convex; {\rm (ii)}\ the right derivative $D^+\Psi_X$ exists and $-1\le  D^+\Psi_X\le 0$; {\rm (iii)}\ $\lim_{x\to \oo} \Psi_X(x)=0$ and $\lim_{x\to -\oo} [x+\Psi_X(x)]=\E X$.

 \item[{\rm (2)}] For every function $\Psi:\R\to \R$ which fulfills {\rm (i), (ii)} and {\rm (iii)} in part {\rm (1)}, there is a random variable $X\in L^1$ such that $\Psi$ is the integrated survival function of $X$. The distribution function of $X$ is given by $F_X(x)=D^+\Psi(x)+1$ for $x\in \R$.
\end{itemize}
\end{lemma}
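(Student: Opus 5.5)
Part (1) is a direct computation from the representation $\Psi_X(x)=\E[(X-x)_+]=\int_x^\oo \Fbar_X(t)\d t$ in \eqref{eq-260206-2} with $n=2$. The map $x\mapsto (X-x)_+$ is pointwise decreasing and convex, and taking expectations preserves both properties, which gives (i). For (ii), the integrand $\Fbar_X$ is right-continuous, so the fundamental theorem of calculus applied to $\int_x^\oo \Fbar_X(t)\d t$ from the right gives $D^+\Psi_X(x)=-\Fbar_X(x)$. This lies in $[-1,0]$. For (iii), $\Psi_X(x)\to 0$ as $x\to\oo$ by dominated convergence, with dominating function $(X-x)_+\le |X|+|x_0|$ for $x\ge x_0$. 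For the left limit, write $x+\Psi_X(x)=\E[X]+\E[(X-x)_-]$, which is the $n=2$ case of the identity preceding Proposition \ref{pr-260201}. Then $\E[(X-x)_-]\to 0$ as $x\to-\oo$, again by dominated convergence with dominating function $X_-$.

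For part (2), let $\Psi$ satisfy (i)--(iii) and define $F(x)=1+D^+\Psi(x)$. The plan is to show that $F$ is a distribution function, take $X\sim F$, and then identify $\Psi_X$ with $\Psi$.

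First, $F$ is a distribution function.
\begin{itemize}
\item Because $\Psi$ is convex, $D^+\Psi$ is increasing and right-continuous, so $F$ is increasing and right-continuous.
\item By (ii), $0\le F\le 1$.
\item As $x\to\oo$: $D^+\Psi(x)$ is increasing, bounded by $0$, and has some limit $c\le 0$. If $c<0$, then $\Psi(x)\to-\oo$, contradicting $\Psi\to 0$. Hence $F(\oo)=1$.
\item As $x\to-\oo$: the function $g(x)=x+\Psi(x)$ is convex, has a finite limit $\E X$ at $-\oo$, and satisfies $D^+g=1+D^+\Psi\ge 0$. If $D^+g(x)$ stayed above some $\epsilon>0$ for all $x$, then $g(x)\to-\oo$ as $x\to-\oo$, contradicting the finite limit. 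So $D^+g$, which is increasing, tends to $0$ at $-\oo$, and therefore $F(-\oo)=0$.
\end{itemize}

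Second, $\Psi_X=\Psi$. A convex function is the integral of its right derivative, so $\Psi(x)-\Psi(y)=\int_x^y D^+\Psi(t)\d t=-\int_x^y \Fbar(t)\d t$ for $x<y$. Letting $y\to\oo$ and using $\Psi(\oo)=0$ gives $\Psi(x)=\int_x^\oo \Fbar(t)\d t$. This integral is finite, so $\E[X_+]<\oo$.

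Third, $X\in L^1$. We need $\E[X_-]<\oo$. Since $g(x)-g(y)=\int_y^x F(t)\d t$, we get $\int_{-\oo}^x F(t)\d t=g(x)-\lim_{y\to-\oo}g(y)=g(x)-\E X<\oo$, which gives $\E[X_-]=\int_{-\oo}^0 F(t)\d t<\oo$. So $X\in L^1$, $\Psi_X=\Psi$, and the constant $\E X$ in (iii) is the actual mean of $X$, by part (1).

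The main obstacle is the tail behaviour in part (2). Showing $F(-\oo)=0$ and the integrability of $X_-$ both rely on the asymptote condition in (iii), and each needs the convexity argument on $g$. The other steps are routine facts about convex functions.
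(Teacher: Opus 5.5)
The paper does not prove this lemma. It simply recalls it, together with Lemma \ref{le-260203}, from \cite{MR01}, so there is no in-paper argument to compare your route against.

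Your proposal is a correct, self-contained proof of these standard facts:
\begin{itemize}
\item In part (1) you work from $\Psi_X(x)=\E[(X-x)_+]=\int_x^\oo\Fbar_X(t)\d t$ and use dominated convergence.
\item In part (2) you first show that $F=1+D^+\Psi$ is a distribution function. Monotonicity and right-continuity come from convexity. The limits $F(\oo)=1$ and $F(-\oo)=0$ come from the two limits in (iii), applied to the convex functions $\Psi$ and $g(x)=x+\Psi(x)$.
\item You then recover $\Psi=\Psi_X$ by integrating $D^+\Psi$, and obtain $\E[X_+]=\Psi(0)$ and $\E[X_-]=\int_{-\oo}^0F(t)\d t$ by integrating $D^+\Psi$ and $D^+g=F$.
\item You correctly read condition (iii) in part (2) as ``the limit exists and is finite'', since $X$ is not yet defined there, and only afterwards identify that limit with the mean via part (1).
\end{itemize}

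There are two small slips to fix.
\begin{itemize}
\item In the second step of part (2), the identity should read $\Psi(y)-\Psi(x)=\int_x^y D^+\Psi(t)\d t=-\int_x^y\Fbar(t)\d t$, or equivalently $\Psi(x)-\Psi(y)=\int_x^y\Fbar(t)\d t$. As you wrote it, letting $y\to\oo$ would give $\Psi(x)=-\int_x^\oo\Fbar(t)\d t$, the opposite sign of your correct conclusion.
\item The identity $x+\Psi_X(x)=\E X+\E[(X-x)_-]$ is the $n=3$ case, not the $n=2$ case, of the display preceding Proposition \ref{pr-260201}, because $\Psi_X=\Fbar^{[2]}_X=\Fbar^{[n-1]}_X$. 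In any event it follows directly from $(X-x)_+=(X-x)+(X-x)_-$.
\end{itemize}
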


\begin{lemma}
  \label{le-260203} 
\begin{itemize}
 \item[{\rm (1)}] $X\le_{\rm st} Y$ if and only if $\Psi_Y(x)-\Psi_X(x)$ is decreasing in $x\in\R$.

 \item[{\rm (2)}] For $X, Y\in L^1$, $X\le_{\rm cx} Y$ if and only if $\Psi_X(x)\le \Psi_Y(x)$ for all $x\in\R$ and 
     $$ \lim_{x\to -\oo} \left [\Psi_Y(x)-\Psi_X(x)\right ]=0.    $$
\end{itemize}
\end{lemma}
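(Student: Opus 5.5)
The statement to prove is Lemma \ref{le-260203}. Both parts reduce to properties of $\Psi_X(x)=\int_x^\oo \Fbar_X(t)\d t=\E[(X-x)_+]$ and to the characterization in Lemma \ref{le-260200}. I plan to work throughout with right derivatives, using $D^+\Psi_X(x)=-\Fbar_X(x)$. This identity holds because $\Fbar_X$ is right-continuous, so the right derivative of its integral from $x$ to $\oo$ is $-\Fbar_X(x)$.

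For part (1), suppose first that $X\le_{\rm st} Y$. Then $\Fbar_X(t)\le \Fbar_Y(t)$ for every $t$. For $x<y$ we have
$$[\Psi_Y(x)-\Psi_X(x)]-[\Psi_Y(y)-\Psi_X(y)]=\int_x^y (\Fbar_Y(t)-\Fbar_X(t))\d t\ge 0,$$
so $\Psi_Y-\Psi_X$ is decreasing. Conversely, suppose $\Psi_Y-\Psi_X$ is decreasing. The difference is a difference of finite convex functions, so its right derivative exists everywhere and equals $\Fbar_X(x)-\Fbar_Y(x)$. A decreasing function has nonpositive right derivative, so $\Fbar_X\le\Fbar_Y$ pointwise, which is exactly $X\le_{\rm st}Y$. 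Since only the difference is used, integrability is not an issue here, provided $X,Y\in L^1$ so that both transforms are finite. I will note this implicit assumption.

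For part (2), the key identity is that $\Psi_X(x)=\E[(X-x)_+]$ and $\E[(X-x)_+]-\E[(X-x)_-]=\E X-x$. It follows that $\Psi_Y(x)-\Psi_X(x)=\E Y-\E X+\E[(Y-x)_-]-\E[(X-x)_-]$. As $x\to-\oo$, the terms $\E[(\cdot-x)_-]$ tend to $0$ by dominated convergence; equivalently, one can use Lemma \ref{le-260200}(1)(iii). Hence
$$\lim_{x\to-\oo}[\Psi_Y(x)-\Psi_X(x)]=\E Y-\E X.$$
The limit condition in the statement is therefore equivalent to $\E X=\E Y$. The condition $\Psi_X\le\Psi_Y$ is exactly $\E[(X-x)_+]\le\E[(Y-x)_+]$ for all $x$, which is the $n=2$ case of the definition of $\le_{2\text{-wcx}}$. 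Combining the two, the pair of conditions in the statement is exactly $X\le_{2\text{-cx}}Y$, which is $\le_{\rm cx}$ by the paper's convention.

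If instead $\le_{\rm cx}$ is taken in its utility sense (all convex $u$), the forward direction uses the convex functions $(t-x)_+$ and $\pm t$. The reverse direction requires the standard approximation of a convex $u$ by functions $a+bt+\sum c_i(t-x_i)_+$ with $c_i\ge0$, together with monotone convergence. This reverse direction is the only genuinely delicate step. I would handle it by first treating $u$ that are linear outside a compact interval; such $u$ are finite combinations of this form, possibly after a limit in the kinks. I would then pass to general convex $u$ via increasing approximations with monotone convergence, using that $\E X=\E Y$ controls the linear parts. Alternatively, I can cite the paper's utility characterization (2) with $n=2$ and avoid this step entirely.
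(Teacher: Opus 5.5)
Your proof is correct. The paper contains no proof to compare it with: Lemma \ref{le-260203}, like Lemma \ref{le-260200}, is only recalled from \cite{MR01}. What you give is therefore a self-contained verification rather than an alternative route.

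Your derivation shows how little the lemma involves under the paper's setup. Part (1) is just $D^+\Psi_X=-\Fbar_X$, which holds by right-continuity of $\Fbar_X$. Finiteness of the transforms only needs $\E X_+,\E Y_+<\oo$, the assumption the statement of part (1) leaves implicit. For part (2), the paper defines $\le_{\rm cx}$ as $\le_{2\hbox{-}{\rm cx}}$, meaning $\Psi_X\le\Psi_Y$ together with $\E X=\E Y$. So part (2) reduces to the left-asymptote identity $\lim_{x\to-\oo}[\Psi_Y(x)-\Psi_X(x)]=\E Y-\E X$. This is Lemma \ref{le-260200}(1)(iii), and the paper itself uses it in Case 2 of the proof of Theorem \ref{th-260204}(2). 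Citing keeps the paper short; your argument shows that nothing beyond elementary calculus is needed once the order is defined through $\Psi$.

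If you want the utility-based version, one step should be tightened. A convex $u$ that is linear outside $[a,b]$ is generally not a finite combination of hinge functions, but you need no limit over kinks. It has the exact representation $u(t)=\alpha+\beta t+\int_{[a,b]}(t-s)_+\,\mu(\d s)$, where $\mu$ is the finite Stieltjes measure of $u'_+$ on $[a,b]$. Tonelli then gives $\E[u(Y)]-\E[u(X)]=\beta(\E Y-\E X)+\int(\Psi_Y-\Psi_X)\,\d\mu\ge 0$ directly.

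For general convex $u$, let $u_n$ agree with $u$ on $[-n,n]$ and be extended by the supporting lines at $\pm n$. Then $u_n$ is convex and linear outside $[-n,n]$, $u_n\le u$, and $u_n\uparrow u$. Apply monotone convergence to $u_n-u_1\ge 0$. Here $u_1(X)$ and $u_1(Y)$ are integrable because $|u_1(t)|\le C(1+|t|)$ and $X,Y\in L^1$. Combining this with $\E[u_n(X)]\le\E[u_n(Y)]\le\E[u(Y)]$ finishes the argument.
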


The classical separation theorem for the increasing convex order $\le_{\rm icx}$ is recalled below, where $\le_{\rm icx}$ is characterized by the interplay between the usual stochastic order $\le_{\rm st}$ and the convex order $\le_{\rm cx}$.

\begin{theorem}
 \label{th-260204}
{\rm \citep[][Theorem 4.A.6]{SS07}}\ \ Let $X$ and $Y$ be two random variables.
\begin{itemize}
  \item[{\rm (1)}] $X\le_{\rm icx} Y$ if and only if there exists a random variable $Z$ such that $X\le_{\rm st} Z\le_{\rm cx} Y$.
  \item[{\rm (2)}] $X\le_{\rm icx} Y$ if and only if there exists a random variable $Z$ such that $X\le_{\rm cx} Z\le_{\rm st} Y$.
\end{itemize}
\end{theorem}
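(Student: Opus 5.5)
The "if" directions of both parts are immediate. In either chain each relation implies $\le_{\rm icx}$, since $\le_{\rm st}$ and $\le_{\rm cx}$ both imply $\le_{\rm icx}$ (Lemma \ref{le-260203}). Moreover $\le_{\rm icx}$ is transitive, because it is pointwise domination of the stop-loss transforms $\Psi$. So the work lies in the constructions. Throughout, $X\le_{\rm icx} Y$ with $X,Y\in L^1$ means $\Psi_X\le\Psi_Y$ on $\R$. By Lemma \ref{le-260200}(iii), $\lim_{x\to-\oo}[\Psi_Y(x)-\Psi_X(x)]=\E Y-\E X=:\delta\ge 0$. In both parts I will build a function $\Psi$ satisfying (i)--(iii) of Lemma \ref{le-260200}(1). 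Lemma \ref{le-260200}(2) then gives a random variable $Z$ with $\Psi_Z=\Psi$, and the required orders are checked with Lemma \ref{le-260203}.

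For (1), take $\Psi=\Psi_Y$ with the goal $Z\le_{\rm cx} Y$ and $X\le_{\rm st}Z$. Concretely, let $\Psi$ be the smallest convex function dominating $\max\{\Psi_X,\Psi_Y-\delta\}$. Near $-\oo$ we have $\Psi_Y-\delta\ge\Psi_X$ up to $o(1)$, so one needs care. A cleaner route is to set $\Psi(x)=\max\{\Psi_X(x),\ \Psi_Y(x)-\delta\}$ only after shifting: define $Z$ by $\Psi_Z(x)=\min\{\Psi_Y(x),\ \Psi_X(x)+\delta\}$, hmm, but a minimum of convex functions is not convex.

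So I will instead use the classical explicit construction via a threshold. Let $a=\sup\{x:\Psi_Y(x)-\Psi_X(x)\ge \delta\}$, or more robustly use the function $x\mapsto\Psi_Y(x)-\Psi_X(x)$ and its running supremum. Define $\Psi(x)=\Psi_X(x)+\sup_{t\ge x}(\Psi_Y(t)-\Psi_X(t))$. This is $\ge\Psi_Y$, and the added term is decreasing, so $X\le_{\rm st}Z$ by Lemma \ref{le-260203}(1) once convexity holds. Its limit at $-\oo$ is $\Psi_X+\delta$-asymptotic, giving $\E Z=\E Y$. I would then need $\Psi\le\Psi_Y$, which fails. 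So instead I will take $\Psi$ to be the lower convex envelope, i.e. the greatest convex minorant, of $\min\{\Psi_Y,\ \Psi_X+\delta\}$. This dominates $\Psi_X$ because $\Psi_X$ is a convex minorant of it. It is $\le\Psi_Y$ with the same left asymptote $x+\E Y$, hence $Z\le_{\rm cx}Y$ by Lemma \ref{le-260203}(2). The hard part is showing that $\Psi-\Psi_X$ is decreasing. On the set where the envelope touches $\Psi_X+\delta$ or $\Psi_Y$ this follows from slope comparisons. On the affine pieces of the envelope I would argue that the slope of the envelope is at most the right derivative of $\Psi_X$ there, using convexity of $\Psi_X$ and the fact that the chord lies above $\Psi_X+$const. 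Properties (ii)--(iii) follow from sandwiching between $\Psi_X$ and $\Psi_Y$.

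For (2), the order of the chain is $X\le_{\rm cx}Z\le_{\rm st}Y$. Now $\Psi_Z$ must equal $\Psi_X$ asymptotically at $-\oo$, dominate $\Psi_X$, and satisfy $\Psi_Y-\Psi_Z$ decreasing. I plan to take $\Psi$ as the smallest function $\ge\Psi_X$ of the form $\Psi_Y-h$ with $h$ decreasing and $h(-\oo)=\delta$. Explicitly, set $h(x)=\inf_{t\le x}(\Psi_Y(t)-\Psi_X(t))$, which is decreasing with $h\to\delta$ at $-\oo$, and put $\Psi=\Psi_Y-h\ge\Psi_X$. The obstacle, which is exactly where Müller's nonnegative argument breaks down, is convexity of $\Psi$: it is convex where $h$ is constant, and equals $\Psi_X$ where $h=\Psi_Y-\Psi_X$. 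I would show that the contact set $\{h=\Psi_Y-\Psi_X\}$ is exactly where $\Psi_Y-\Psi_X$ attains a new running minimum. At such points the right slope of $\Psi_X$ dominates that of $\Psi_Y$ from the left side, so gluing pieces of $\Psi_X$ and of $\Psi_Y$ minus a constant preserves convexity. I expect this slope-matching at the junctions, with only one-sided derivatives available, to be the main technical step. Finally, $-1\le D^+\Psi\le 0$ holds piecewise, $\Psi(\oo)=0$ because $h\to0$, and $x+\Psi(x)\to\E X$ because $h\to\delta$.
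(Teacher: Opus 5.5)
Your part (2) is sound, but the construction you settle on for part (1) fails. The greatest convex minorant $\Psi$ of $m=\min\{\Psi_Y,\Psi_X+\delta\}$ need not satisfy $X\le_{\rm st}Z$. Two of your slope claims are false: the comparison at points where the envelope touches $\Psi_Y$, and the bound (slope $\le D^+\Psi_X$) on its affine pieces.

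Take $X$ uniform on $\{-1,0,2\}$, and let $\P(Y=-1/2)=1/3$, $\P(Y=0)=1/2$, $\P(Y=4)=1/6$. Then $\E X=1/3$, $\E Y=1/2$ and $\delta=1/6$. Also $\Psi_X\le\Psi_Y$, and the two coincide on $[-1/2,0]$. On $[0,2]$ we have $\Psi_X(x)=2/3-x/3$ and $\Psi_Y(x)=2/3-x/6$, so $m(0)=2/3$ and $m(2)=1/6$. Consider the piecewise linear function equal to $1/2-x$ on $(-\oo,-1/2]$, with kinks at $-1/2,0,2,4$ and slopes $-1,-2/3,-1/4,-1/12,0$. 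It is a convex minorant of $m$ that touches $m$ at $0$ and $2$. Hence the envelope equals $2/3-x/4$ on $[0,2]$, and $\Psi-\Psi_X=x/12$ \emph{increases} there. Equivalently, $\Fbar_Z=1/4<1/3=\Fbar_X$ on $[0,2)$.

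The conceptual issue is this. Every admissible $\Psi_Z$ is convex and lies below $m$, hence below the envelope. But being the largest candidate does not make the envelope itself admissible. Once $\Psi_Y-\Psi_X$ dips to $0$ (here at $x=0$), nothing stops the envelope from pulling away from $\Psi_X$ again.

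Part (1) needs no envelope. Put $\Psi_Z(x)=\max\{\Psi_X(x),\E Y-x\}$, i.e.\ $Z=\max(X,a)$ with $a+\Psi_X(a)=\E Y$ (in the example, $a=-1/2$). This $\Psi_Z$ satisfies (i)--(iii) of Lemma~\ref{le-260200}, with left asymptote $\E Y-x$. It lies below $\Psi_Y$ because both $\Psi_X$ and $\E Y-x$ do. Finally, $\Psi_Z-\Psi_X=(\E Y-x-\Psi_X(x))_+$ is decreasing, since $\E Y-x-\Psi_X(x)$ has right derivative $\Fbar_X(x)-1\le 0$. This is the M\"uller-type argument that the paper says carries over to general random variables.

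For part (2), your choice $\Psi_Z=\Psi_Y-h$ with $h(x)=\inf_{t\le x}(\Psi_Y-\Psi_X)(t)$ is correct. It differs genuinely from the paper, which splits on whether $\essinf(Y)$ is finite and glues $\Psi_X$ (or the auxiliary $\varphi$), a tangent line, and $\Psi_Y$. Yours is a single canonical (minimal) choice with no case analysis.

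However, the contact set $\{h=\Psi_Y-\Psi_X\}$ need not be a finite union of intervals, so your junction-gluing sketch is not enough. Replace it with a three-point argument. Fix $x<z<y$ and write $g=\Psi_Y-\Psi_X$.
\begin{itemize}
\item If $\Psi_Z(z)=\Psi_X(z)$, use $\Psi_Z\ge\Psi_X$ and the convexity of $\Psi_X$.
\item If $h(x)=h(z)$, use $h(y)\le h(z)$.
\item Otherwise, let $t$ be the last minimizer of $g$ on $(-\oo,z]$. It lies in $(x,z)$ and has $D^+g(t)\ge 0$. So the function equal to $\Psi_X$ on $(-\oo,t]$ and to $\Psi_Y-g(t)$ on $[t,\oo)$ is convex, lies below $\Psi_Z$, and agrees with $\Psi_Z$ at $z$.
\end{itemize}
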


\cite{Mak94} proved Theorem \ref{th-260204} by a sample path characterization of the order $\le_{\rm icx}$ (Theorem \ref{th-260205} (2)). An alternative proof of Theorem \ref{th-260204} can also be found in \cite{Mak94}, who first gave a constructive proof for distributions with finite support and rational probabilities, and then extended the result to the class of general distribution functions on $\R$ by a multistep approximation procedure via limiting arguments.

\cite{Mul96} proved Theorem \ref{th-260204} for nonnegative random variables by exploiting Lemmas \ref{le-260200} and \ref{le-260203}, and his method still applies to the first part of Theorem \ref{th-260204} for general random variables, but not to the second part. 

We now give an alternative proof of the second part of Theorem \ref{th-260204} based on the integrated survial function, and the random variable $Z$ is constructed directly therein.
\medskip

\noindent \underline{\emph{Proof of Theorem \ref{th-260204}(2)}}:\ \ It suffices to prove the necessity. We aim to explicitly construct such a random variable $Z$. If $\E X = \E Y$, we simply take $Z=Y$. Now consider the nontrivial case $\E X <\E Y$. Since $X\le_{\rm icx}Y$, we have $\Psi_X(x)\le \Psi_Y(x)$ for all $x\in \R$. The following discussion will be divided into two cases based on whether the essential infimum of $Y$ is finite.

\emph{Case 1}.\ \ Suppose $\essinf(Y)= x_0\in\R$. Then $\Psi_Y(x)=\E Y-x$ for $x\le x_0$. Two subcases arise. First, suppose $\Psi_X(x_0)< \Psi_Y(x_0)$. We use an idea from the proof of Theorem 3.7(b) in \cite{Mul96}. There exists some $\alpha\in [-1,0]$ such that the linear function $x\mapsto \Psi_X(x_0)+\alpha (x-x_0)$ is a tangent for the graph of $\Psi_Y(x)$, that is, there exists some $x_1>x_0$ such that $\Psi_Y(x_1)=\Psi_X(x_0) +\alpha (x_1-x_0)$, and $\Psi_Y(x) \ge \Psi_X(x_0)+\alpha (x-x_0)$ for $x>x_0$. Here, $\alpha$ can be chosen as the left derivative $D^{-}\Psi_Y(x_1)= \Fbar_Y(x_1-)$. Define the function
$$
   \Psi_Z(x) =\left\{\begin{array}{ll} \Psi_X(x), & x< x_0,\\
     \Psi_X(x_0) +\alpha (x-x_0), & x_0\le x< x_1,\\
     \Psi_Y(x), & x \ge x_1.    \end{array}\right.
$$
It is easy to see that $\Psi_Z(x)$ is a integrated survival function of some random variable $Z$, and that $\Psi_Z(x)\ge \Psi_X(x)$ for $x\in\R$; see Figure \ref{fig-1}. Thus, by Lemma \ref{le-260203}, $X\le_{\rm cx} Z$. On the other hand, note that
$$
   \Psi_Y(x)-\Psi_Z(x) =\left\{\begin{array}{ll} \E Y-x -\Psi_X(x), & x\le x_0,\\
               \Psi_Y(x)-\Psi_X(x_0)-\alpha (x-x_0), & x_0<x \le x_1,\\
               0, & x>x_1,  \end{array} \right.
$$
which is decreasing in $x\in\R$. Hence, again by Lemma \ref{le-260203}, $Z\le_{\rm st} Y$.

\begin{figure}[hbt]
\begin{center}
\begin{tikzpicture}\tikzset{
    >=Stealth,
    dot/.style={circle,fill,black,minimum size=4pt,inner sep=0pt},
    dashed line/.style={dashed,draw=black!60},
    main line/.style={thin}
    }

\draw[->] (-4.5,0) -- (4.5,0) node[right] {$x$};
\draw[->] (0,-0.5) -- (0,3.5) node[above] {$y$};

\def\psiY{
    \x < -2.5 ? -\x + 0.4*exp(0.5*2.5) - 2.5 : 0.4*exp(-0.5*\x)
}
\def\psiX{
    1.0*exp(-0.5*(\x + 3))
}

\draw[main line,black] plot[variable=\x,domain=-4.0:4.5,samples=200] (\x,{\psiY});
\draw[main line,red] plot[variable=\x,domain=-4.0:4.5,samples=200] (\x,{\psiX});

\coordinate (x0) at (-2.5, 0.7788);
\coordinate (x1) at (0.5, 0.31152);

\coordinate (Zleft)  at (-3.8, 0.981288);
\coordinate (Zright) at (2.5, 0);

\draw[main line,blue,dashed] (Zleft) -- (x0);
\draw[main line,blue]        (x0) -- (x1);
\draw[main line,blue,dashed] (x1) -- (Zright);

\node[dot] at (x0) {};
\node[dot] at (x1) {};

\coordinate (Yx0) at (-2.5, 1.396137);
\draw[dashed line] (Yx0) -- (-2.5,0) node[below] {$x_0$};
\draw[dashed line] (x1) -- (0.5,0) node[below] {$x_1$};

\node[black,above] at (-3, 2.1) {$\Psi_Y$};
\node[red,below] at (-3.2, 1.7) {$\Psi_X$};
\node[blue,align=center] at (-2, 0.9) {$\Psi_Z$};
\end{tikzpicture}
\end{center}
\caption{The plots of $\Psi_X$, $\Psi_Y$ and $\Psi_Z$.}
 \label{fig-1}
\end{figure}

Next, suppose suppose $\Psi_X(x_0)= \Psi_Y(x_0)$. Define
$$ 
   \Psi_Z(x)=\left\{ \begin{array}{ll}    \Psi_X(x), & x<x_0,\\
            \Psi_Y(x), & x\ge x_0. \end{array} \right.
$$
Since $\Psi_X(x)\le \Psi_Y(x)$ for all $x\in \R$ and $\Psi_X(x_0)=\Psi_Y(x_0)$, it follows that $D^-\Psi_X(x_0)\le D^+\Psi_X(x_0)\le D^+\Psi_Y(x_0)$, which implies $\Psi_Z$ is convex. Furthermore, one can verify $\Psi_Z$ is an integrated survival function for some random variable $Z$. Besides, it is straightforward to confirm that $\Psi_Z(x)\ge \Psi_X(x)$ for all $x\in \R$, and $\Psi_Y(x)-\Psi_Z(x)$ is decreasing in $x\in\R$. Consequently, by Lemma \ref{le-260203}, we obtain $X\le_{\rm cx} Z\le_{\rm st}Y$. This proves Case 1.

\emph{Case 2}.\ \ Suppose $\essinf(Y)= -\oo$. Then $D^+\Psi_Y(x)=-\Fbar_Y(x)\in (-1,0]$ for all $x\in\R$. Define 
$$
    \varphi(x) =\Psi_X(x)+\int_{-\oo}^x \(\Fbar_X(t)-\Fbar_Y(t)\)_+ \d t,\quad x\in\R,
$$
which satisfies $D^+\varphi(x)=- \min\left\{\Fbar_X(x), \Fbar_Y(x)\right \}\in (-1,0]$. Thus $\varphi$ is decreasing and convex. Since 
$$
    \E Y -\E X =\lim_{x\to -\oo} [\Psi_Y(x)-\Psi_X(x)] =\lim_{x\to -\oo} [\Psi_Y(x)-\varphi(x)] >0,
$$ 
there exists $y_1\in\R$ such that $\Psi_Y(x)>\varphi(x)$ for all $x\in(-\oo, y_1]$. Denote 
$$
     \ell(x) = \Psi_Y(y_1) -\Fbar_Y(y_1) (x-y_1),\quad x\in\R. 
$$
Since $\ell(y_1)>\varphi(y_1)$ and $-\Fbar_Y(y_1)>-1=\lim_{x\to -\oo} D^+\varphi(x)$, there exists a point $y_0< y_1$ such that $\ell(x)>\varphi(x)$ for $x\in (y_0, y_1]$ and $\ell(x)<\varphi(x)$ for $x\in (-\oo, y_0)$ by the convexity of $\varphi$. Next, we define 
$$
    \Psi_Z(x)=\left\{\begin{array}{ll}   \varphi(x), & x < y_0,\\
                 \ell(x), & y_0\le x< y_1,\\
                 \Psi_Y(x), & x\ge y_1;  \end{array} \right.
$$
see Figure \ref{fig-2}. One can verify that $\Psi_Z$ is an integrated survival function of some random variable $Z$. Morever, when $x\in (-\oo, y_1)$, we have $\Psi_Z(x)= \max\{\varphi(x), \ell(x)\} \ge \varphi(x)\ge \Psi_X(x)$. Thus, $\Psi_Z(x)\geq \Psi_X(x)$ holds for all $x\in \R$. Also, $\E Z- \E X =\lim_{x\to -\oo} [\varphi(x) -\Psi_X(x)]=0$. Hence, $X\le_{\rm cx}Z$.

\begin{figure}[hbt]
\begin{center}
\begin{tikzpicture}[
    >=Stealth,
    dot/.style={circle,fill,black,minimum size=4pt,inner sep=0pt},
    dashed line/.style={dashed,draw=black!60},
    main line/.style={thin}
]
\draw[->] (-4.5,0) -- (4.5,0) node[right] {$x$};
\draw[->] (0,-0.5) -- (0,4) node[above] {$y$};
\def\yone{-1}
\def\yzero{-3.148054}
\draw[main line,black] 
    plot[variable=\x,domain=-4.0:4.5,samples=200] 
    (\x, {0.4 * e^(-0.5*\x)});
\draw[main line,red] 
    plot[variable=\x,domain=-4.0:4.5,samples=200] 
    (\x, {1.0 * e^(-0.5*(\x+3))});
\draw[main line,green!70!black] 
    plot[variable=\x,domain=-4.0:4.5,samples=200] 
    (\x, {0.6 * e^(-0.5*(\x+1.5))});
\draw[main line,blue] 
    plot[variable=\x,domain=-4.0:1.0,samples=100] 
    (\x, {-0.329744*\x + 0.329744});
\node[black]    at (-4.0, 3.3) {$\Psi_Y$};
\node[red]      at (-4.0, 1.3) {$\Psi_X$};
\node[green!70!black] at (-4.0, 2.3) {$\varphi$};
\node[blue]     at (-1.0, 1) {$\ell$};
\coordinate (P1) at (\yone, {0.4 * e^(-0.5*\yone)});
\coordinate (P0) at (\yzero, {0.6 * e^(-0.5*(\yzero+1.5))});
\node[dot] at (P1) {};
\node[dot] at (P0) {};
\draw[dashed line] (P1) -- (\yone,0) node[below] {$y_1$};
\draw[dashed line] (P0) -- (\yzero,0) node[below] {$y_0$};
\end{tikzpicture}
\end{center}
\caption{The plots of $\Psi_X$, $\Psi_Y$ and $\varphi$.}
 \label{fig-2}
\end{figure}

On the other hand, we can compute that
$$
   D^+\(\Psi_Y(x)-\Psi_Z(x)\) =\left\{ \begin{array}{ll}  -\(\Fbar_Y(x)-\Fbar_X(x)\)_{+}, & x < y_0,\\
                \Fbar_Y(y_1)-\Fbar_Y(x), &  y_0\le x< y_1,\\
                   0, & x\ge y_1. \end{array} \right.
$$
Thus, $D^+\(\Psi_Y(x)-\Psi_Z(x)\)\le 0$ for all $x\in\R$, impling that $\Psi_Y(x)-\Psi_Z(x)$ is decreasing in $x\in\R$. By Lemma \ref{le-260203}, we have $Z\le_{\rm st} Y$. This completes the proof. \ \qed
\medskip

\cite{MR01} proved Theorem \ref{th-260205} (2) via Theorems \ref{th-260204} (1) and \ref{th-260205} (1). Their proof, however, is not rigorous. More precisely, for three random variables $\widehat{X}, \widehat{Y}$ and $\widehat{Z}$ defined on the same probability space, if $\widehat{Z}\ge \widehat{X}$, $\E \big [\widehat{Y} | \widehat{Z}\big ]= \widehat{Z}$, a.s., and both $\big [ \widehat{Z}| \widehat{X}=z\big ]$ and $\big [ \widehat{Y}| \widehat{Z}=z\big ]$ are stochastically increasing in $z$, we cannot conclude that $ \E \big [\widehat{Y} | \widehat{X}\big ]\ge \widehat{X}$, a.s.. This is mainly because, in this case, the $\sigma$-fields generated by $\widehat{Z}$ and $\widehat{X}$ may not be nested.

Now, we prove Theorem \ref{th-260205} (2) via Theorems \ref{th-260204} (2) and  \ref{th-260205} (1).
\medskip

\noindent \underline{\emph{Proof of Theorem \ref{th-260205} (2)}}:\ \ It suffices to prove the necessity. Assume $X\le_{\rm icx} Y$. By Theorem \ref{th-260204} (2), there exists a random variable $Z$ such that $X\le_{\rm cx} Z\le_{\rm st} Y$.
By Theorem \ref{th-260205} (1), we can construct two random variables $\widehat{X}$ and $\widehat{Z}$ on the same atomeless probability space $\big (\widehat{\Omega}, \widehat{\mathscr{F}}, \widehat{\P}\big )$ such that $\widehat{X} \overset{d}{=} X$, $\widehat{Z}\overset{d}{=} Z$ and 
$$
    \E \[\widehat{Z} \big |\widehat{X}\] = \widehat{X},\, {\rm a.s.}.
$$
Since the probability space is atomless, there exists a standard uniformly $(0,1)$-distributed random variable $U_Z$ defined on  $\big (\widehat{\Omega}, \widehat{\mathscr{F}}, \widehat{\P}\big )$ such that $\widehat{Z}=F_Z^{-1}(U_Z)$, a.s.. The existence of $U_Z$ was given in Lemma A.28 of \cite{FS11}; also see \cite{Del12}. Define $\widehat{Y}=F_Y^{-1}(U_Z)$. Then $Z\le_{\rm st} Y$ implies $\widehat{Y}\ge \widehat{Z}$, a.s.. Therefore,
$$
  \E\[\widehat{Y} \big |\widehat{X}\] =\E\[\widehat{Z}+\(\widehat{Y}-\widehat{Z}\) \big | \widehat{X}\] = \widehat{X} + \E\[\widehat{Y}-\widehat{Z} \big|\widehat{X}\] \ge \widehat{X}, \, {\rm a.s.}.
$$
This completes the proof of the theorem.\ \qed

\section{Separation theorem for the weak 3-convex order}
\label{sect-4}

The weak $3$-convex order $\le_{\rm 3\hbox{-}wcx}$ admits a separation via $\le_{\rm 2\hbox{-}cx}$ and $\le_{\rm 3\hbox{-}cx}$, as stated in the following theorem. \cite{DLM14} proved this in their Proposition 3.3 for nonnegative random variables. Their method is constructive, and does not apply to general random variables.

\begin{theorem}
 \label{th-260207}
Given two random variables $X,Y\in L^2$, $X\le_{\rm 3\hbox{-}wcx} Y$ if and only if there exists a random variable $Z\in L^2$ such that $X\le_{\rm 2\hbox{-}cx} Z\le_{\rm 3\hbox{-}cx} Y$.
\end{theorem}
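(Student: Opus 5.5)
The sufficiency direction is short. $X\le_{\rm 2\hbox{-}cx}Z$ gives $\E X=\E Z$ and $\Psi_X\le\Psi_Z$. Integrating from $x$ to $\infty$ gives $\Fbar^{[3]}_X\le\Fbar^{[3]}_Z$. Combining this with $Z\le_{\rm 3\hbox{-}cx}Y$, which gives $\E Z=\E Y$ and $\Fbar^{[3]}_Z\le\Fbar^{[3]}_Y$, we get $X\le_{\rm 3\hbox{-}wcx}Y$.

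For necessity, assume $X\le_{\rm 3\hbox{-}wcx}Y$. Put $\Psi=\Fbar^{[2]}$ and $D=\Psi_Y-\Psi_X$. Since $\E X=\E Y$, Lemma \ref{le-260200}(iii) gives $D(x)\to0$ as $x\to\pm\infty$. By hypothesis $H(x):=\int_x^\infty D(t)\,dt\ge 0$ for all $x$. By Proposition \ref{pr-260202} with $n=3$, $H(-\infty)=c:=\tfrac12(\E Y^2-\E X^2)\ge0$. I want $Z\in L^2$ whose stop-loss transform $\Psi_Z$ satisfies:
\begin{itemize}
\item[(a)] $\Psi_Z$ is convex with right derivative in $[-1,0]$, $\Psi_Z(\infty)=0$, and left asymptote $\E X-x$, so that Lemma \ref{le-260200}(2) applies and $\E Z=\E X$;
\item[(b)] $\Psi_Z\ge\Psi_X$, so that $X\le_{\rm cx}Z$ by Lemma \ref{le-260203};
\item[(c)] $\int_x^\infty(\Psi_Y-\Psi_Z)\ge0$ for all $x$, with equality in the limit $x\to-\infty$; by Proposition \ref{pr-260202} the limiting equality is exactly $\E Z^2=\E Y^2$.
\end{itemize}
Together (a)–(c) give $Z\le_{\rm 3\hbox{-}cx}Y$.

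The candidate family is $\Psi_t=\max\{\Psi_X,\Psi_Y-t\}$ for $t\ge0$. It is a maximum of convex functions with slopes in $[-1,0]$, so it is convex with slopes in $[-1,0]$. It equals $\Psi_X$ for large $x$, because $\Psi_Y-t\to-t<0$. Its left asymptote is $\E X-x$, because $\E X=\E Y$. So (a) and (b) hold automatically. Further, $\Psi_Y-\Psi_t=\min\{D,t\}$, and $g(t):=\int_{\R}\min\{D,t\}$ is continuous and increasing in $t$. It satisfies $g(t)\to\int D=c\ge0$ as $t\to\infty$, and $g(t)\to-\int D_-$ as $t\downarrow0$. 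By the intermediate value theorem there is $t^*$ with $g(t^*)=0$, which is the moment condition in (c). If $D\ge0$ everywhere, take $Z=Y$ instead.

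The main obstacle is the pointwise part of (c), namely $\int_x^\infty\min\{D,t^*\}\ge0$ for every $x$. This does not follow from $H\ge0$ alone, because the truncation removes positive mass $(D-t^*)_+$ that may sit to the right of $x$. I would therefore localise the truncation to a left half-line. For a threshold $s$, replace $\Psi_Y-t$ by a function that equals $\Psi_Y-t$ only for $x<s$. To the right of $s$ it is continued by a supporting line of slope $-\Fbar_Y(s)$, which stays below $\Psi_X$ eventually. This mirrors the tangent-line splicing in the proof of Theorem \ref{th-260204}(2).

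With the modification confined to $(-\infty,s]$, one has $\Psi_Z=\Psi_X$ on $[s',\infty)$ for some $s'$, so (c) reduces to $H\ge0$ there. On the left, $\Psi_Z\le\Psi_Y$, so $\int_x^\infty(\Psi_Y-\Psi_Z)$ is decreasing as $x$ decreases from $s'$. It then only needs to end at $0$, which is arranged by choosing the parameters continuously via the intermediate value theorem. As in Section \ref{sect-3}, I would split into the cases $\essinf(Y)$ finite and $\essinf(Y)=-\infty$. The case matters because the tangent slope must be $>-1$ for the spliced function to fall back below $\Psi_X$ at $-\infty$. When $\essinf(Y)=x_0$ is finite, I would splice at $x_0$, as in Case 1 of that proof.

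Finally, $Z\in L^2$ holds because $\Psi_Z$ is sandwiched between $\Psi_X$ and $\max\{\Psi_X,\Psi_Y\}$, both of which are integrable against the appropriate weights.
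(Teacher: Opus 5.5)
Your sufficiency argument is fine, but the necessity construction fails because of a sign error at its core. Put $G(x)=\int_x^\infty(\Psi_Y-\Psi_Z)\,du$. Condition (c) asks for $G\ge 0$ with $G(-\infty)=0$, and since $G'=\Psi_Z-\Psi_Y$, this says $G(x)=\int_{-\infty}^x(\Psi_Z-\Psi_Y)\,du\ge 0$. Suppose, as you assert, that $\Psi_Z\le\Psi_Y$ on $(-\infty,s']$. Then $G$ \emph{increases} as $x$ decreases, so ending at $0$ forces $G\le 0$ on that half-line. This contradicts $G(s')=H(s')\ge 0$ unless $\Psi_Z=\Psi_Y$ on the whole half-line. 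The assertion is also false wherever $\Psi_X>\Psi_Y$, since $\Psi_Z\ge\Psi_X$.

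Concretely, for every $t>0$ your functions, truncated or localised, coincide with $\Psi_X$ near $-\infty$, because $D\to 0$ gives $\Psi_Y-t<\Psi_X$ there. Hence $G(x)=-\int_{-\infty}^x D(u)\,du$ for $x$ far to the left, which is strictly negative whenever $D>0$ on a left half-line. This already happens in the paper's Example (two-point $X$, Laplace $Y$). There $D(x)=e^x/2>0$ for $x\le-4/(4e^2-1)$, while $D(3)<0$ and $\E[X^2]<\E[Y^2]$. So your intermediate-value step returns $t^*>0$, and no choice of $(s,t)$ with $t>0$ gives $Z\le_{\rm 3\hbox{-}cx}Y$.

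Requiring the splice to ``fall back below $\Psi_X$ at $-\infty$'' pushes in exactly the wrong direction. The split on $\essinf(Y)$ also does not isolate this situation, since the Laplace $Y$ has $\essinf(Y)=-\infty$. Only $t=0$ escapes, and then $\Psi_Z=\max\{\Psi_X,\Psi_Y\}\ge\Psi_Y$ on the left, the opposite of your claim.

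The missing idea is that on a left half-line $\Psi_Z$ must dominate $\Psi_Y$, not lie below it. The paper splits on $x_0=\inf\{x:\Psi_X(x)\ge\Psi_Y(x)\}$.
\begin{itemize}
\item If $x_0=-\infty$, it takes $\Psi_Z=\max\{\Psi_X,\Psi_Y\}$ on $(-\infty,x_1]$, where $\Psi_X(x_1)\ge\Psi_Y(x_1)$ and $\int_{-\infty}^{x_1}|\Psi_Y-\Psi_X|<\tfrac12(\E[Y^2]-\E[X^2])$. It then uses the chord of $\Psi_X$ on $[x_1,c]$, then $\Psi_X$, and tunes $c$ so that $\E[Z^2]=\E[Y^2]$.
\item If $x_0$ is finite, it takes $\Psi_Z=\Psi_Y$ on $(-\infty,x_2]$ for some $x_2<x_0$. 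It then uses a line of slope $\alpha\in[-\Fbar_Y(x_2),0)$ until it meets $\Psi_X$, then $\Psi_X$, and tunes $\alpha$.
\end{itemize}
In both cases $\eta=\Fbar^{[3]}_Z-\Fbar^{[3]}_Y$ is controlled piece by piece. On the left piece it starts at $0$ and decreases, because $\Psi_Z\ge\Psi_Y$ there. On the right piece it equals $\Fbar^{[3]}_X-\Fbar^{[3]}_Y\le 0$. On the linear piece, $\Psi_Z-\Psi_Y$ is concave and nonnegative at the left endpoint, so it changes sign at most once and $\eta$ is bounded by its endpoint values. This left-domination plus single-crossing structure is what your proposal lacks.
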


\begin{proof}
It suffices to prove the necessity. Suppose $X\le_{\rm 3\hbox{-}wcx}Y$. Without loss of generality, we assume $\E X =\E Y=0$ and $\E [X^2]< \E [Y^2]$.  By Proposition \ref{pr-260201} with $n=3$, we have 
$$
   \int_{-\oo}^0 \(\Fbar^{[2]}(u) +u\) \d u   =\frac{1}{2} \E [X_{-}^2].
$$
Together with the fact that $\Fbar_X^{[2]}(u) + u=\Fbar_X^{[2]}(u)-(\E X-u) \ge 0$ for $u\in\R$, we have 
$$
     \lim_{t\to -\oo} \int_{-\oo}^t  \left |\Fbar_X^{[2]}(u) + u\right | \d u=0. 
$$
Similarly, we have 
$$
    \lim_{t\to -\oo} \int_{-\infty}^t \left |\Fbar^{[2]}_Y(u) +u\right | \d u=0. 
$$
Consequently, 
\begin{equation}
 \label{eq-260207}
   \lim_{t\to -\oo} \int_{-\oo}^t \left |\Fbar^{[2]}_Y(u)-\Fbar^{[2]}_X(u)\right | \d u=0.
\end{equation}

We aim to explicitly construct such a random variable $Z$ satisfying $X\le_{\rm 2\hbox{-}cx} Z\le_{\rm 3\hbox{-}cx} Y$. To this end, we examine the following two cases, depending on whether $x_0$ is finite or infinite, where $x_0=\inf\big \{x\in\R: \Fbar^{[2]}_X(x)\ge \Fbar_Y^{[2]}(x)\big \}$.

\emph{Case 1}.\ Suppose $x_0= -\oo$. In view of \eqref{eq-260207}, we choose $x_1\in \R$ such that $\Fbar_X^{[2]}(x_1)\ge\Fbar_Y^{[2]} (x_1)$ and $\int_{-\oo}^{x_1} \big |\Fbar_Y^{[2]}(u)-\Fbar_X^{[2]}(u)\big | \d u <(\E [Y^2]- \E [X^2])/2$. For $c>x_1$, we define 
$$
   \Psi_c(x)=\left\{ \begin{array}{ll}
    \max\left\{\Fbar_Y^{[2]}(x),\Fbar^{[2]}_X(x)\right\}, & x\le x_1,\\[3pt]
    \dfrac{\Fbar_X^{[2]}(c)-\Fbar_X^{[2]}(x_1)}{c-x_1}(x-x_1) +\Fbar_X^{[2]} (x_1), & x_1<x<c,\\[3pt]
           \Fbar_X^{[2]}(x), & x>c,   \end{array}\right.
$$
where the parameter $c$ is to be determined. A straightforward calculation gives that
\begin{align*}
   \lim_{c\to x_1}\int_{-\oo}^\oo \(\Psi_c(x)-\Fbar^{[2]}_X(x)\) \d x 
   & =\int_{-\oo}^{x_1} \(\max\left\{\Fbar^{[2]}_Y(x),\Fbar_X^{[2]}(x)\right\} -\Fbar_X^{[2]}(x)\) \d x\\
   & \le \int_{-\oo}^{x_1} \left |\Fbar_Y^{[2]}(x)- \Fbar_X^{[2]}(x)\right | \d x < \frac {1}{2} \(E [Y^2]-\E [X^2]\),
\end{align*}
and
$$
   \lim_{c\to \oo} \int_{-\oo}^\oo \(\Psi_c(x)-\Fbar_X^{[2]} (x)\) \d x  = + \infty.
$$
Thus, there exists $c_1>x_1$ such that 
$$
   \int_{-\oo}^\oo \(\Psi_{c_1}(x)-\Fbar_X^{[2]}(x) \)\d x = \frac {1}{2} \(\E [Y^2] -\E [X^2]\)
$$ 
by the continuity of $\int_{-\oo}^\oo \big (\Psi_c(x)-\Fbar_X^{[2]}(x)\big )\d x$ with respect to $c$. One can verify that $\Psi_{c_1}=\Fbar^{[2]}_Z$ for some random variable $Z$; see Figure \ref{fig-3}. Observe that 
$$
   \lim_{t\to -\oo} \(\Fbar_Z^{[2]}(t)+t\) = \lim_{t\to -\oo}\(\max\left\{\Fbar_Y^{[2]}(t),\Fbar_X^{[2]}(t)\right \} +t\)=\max\{\E Y, \E X\}=0,
$$
which implies $\E Z=0$. Morever, the convexity of $\Fbar_X^{[2]}$ yields $\Fbar_Z^{[2]}(x)\ge \Fbar_X^{[2]}(x)$ for all $x\in \R$. Hence, $X\le_{\rm 2\hbox{-}cx}Z$. On the other hand, 
$$
  \frac {1}{2}\(\E [Z^2]-\E [X^2]\)=\int_{-\oo}^\oo \(\Fbar_Z^{[2]}(x)-\Fbar_X^{[2]}(x)\)\d x =\frac {1}{2}\(\E [Y^2]-\E [X^2]\),
$$
implying $\E [Z^2] =\E [Y^2]$. Next, we define
\begin{equation}
 \label{eq-260210-1}
    \eta(x) := \Fbar_Z^{[3]}(x)-\Fbar_Y^{[3]}(x),\quad  x\in\R. 
\end{equation}
Observe that $\eta'(x)=-\big (\Fbar_X^{[2]}(x)-\Fbar_Y^{[2]}(x)\big )_+\le 0$ for $x<x_1$, so $\eta(x)$ is decreasing on $(-\oo,x_1)$. Since $\lim_{x\to -\oo} \eta(x) =(\E [Z^2]-\E [Y^2])/2=0$, it follows that $\eta(x)\le 0$ for all $x\le x_1$. Additionally, for $x\ge c_1$,
$$
   \eta(x)=\int_x^\oo \(\Fbar_X^{[2]}(t)-\Fbar_Y^{[2]}(t)\)\d t=\Fbar_X^{[3]}(x)-\Fbar_Y^{[3]}(x)\le 0
$$
since $X\le_{3\hbox{-}{\rm wcx}} Y$. Morever, $\Fbar_Z^{[2]}(x)$ is linear on the interval $(x_1, c_1)$ and satisties $\Fbar^{[2]}_Z(x_1)\ge \Fbar_Y^{[2]}(x_1)$. By the convexity of $\Fbar_Y^{[2]}$, $\Fbar_Z^{[2]}$ either down-crosses $\Fbar_Y^{[2]}$ exactly once or dominates $\Fbar_Y^{[2]}$ throughout $(x_1,c_1)$. Equivalently, $\eta(x)$ is either decreasing on $(x_1,c_1)$ or first decreasing then increasing on $(x_1,c_1)$. In either case $\eta(x)\le \max\{\eta(x_1),\eta(c_1)\}\le 0$ for all $x\in(x_1,c_1)$. Therefore, $\eta(x)\le 0$ for all $x\in\R$, and thus $Z\le_{3\hbox{-}{\rm cx}} Y$. This proves Case 1.

\begin{figure}[hbt]
\begin{center}
\begin{tikzpicture}
\tikzset{
		box/.style ={
		circle, 
		minimum width =1.5pt, 
		minimum height =1.5pt, 
		inner sep=1.5pt, 
		draw=black, 
		fill=white
		}
		}
\draw[->,thick] (-5,0) -- (4,0) node[right] {$x$};
\draw[->,thick] (-1,0) -- (-1,4.8) node[above] {$y$};			
\draw[domain=-4:0] plot (\x,{0.2*\x*(\x-1)+0.5});
\draw[domain=0:3] plot (\x,{0.2*exp(-\x)+0.3})node[above] {$\Fbar_Y^{[2]}$};
\draw[domain=-4:-2,color=red] plot (\x,{-\x});
\draw[domain=-2:2,color=red] plot (\x,{-0.5*\x+1})node[above] {$\Fbar_X^{[2]}$};
\draw[domain=-2.4:0,color=blue] plot (\x,{-0.6*\x+1})node[above] {$\Fbar_Z^{[2]}$};
\node at (-2.4,2.4) { $\bullet$};
\node at (0,1) { $\bullet$};
\draw  [dash pattern={on 1.5pt off 2pt}] (0,0) -- (0,1);
\draw  [dash pattern={on 1.5pt off 2pt}] (-2.4,0) -- (-2.4,2.4);
\node at (0,-0.3) {$c_1$};
\node at (-2.4,-0.3) {$x_1$};
\node at (-4.8,-0.3) {$x_0$};
\end{tikzpicture} 
\end{center}
\caption{The plots of $\Fbar_X^{[2]}$, $\Fbar_Y^{[2]}$ and $\Fbar_Z^{[2]}$.}
 \label{fig-3}
\end{figure}

\emph{Case 2}.\ Suppose $x_0\in \R$. Choose $x_2<x_0$. For each $\alpha\in \[-\Fbar_Y(x_2),0\)$, we define
$$
   \Psi_{\alpha,c_2} (x)=\left\{\begin{array}{ll}
     \Fbar_Y^{[2]}(x), & x\le x_2,\\
    \alpha (x-x_2)+\Fbar_Y^{[2]}(x_2), & x_2<x<c_2,\\[3pt]
    \Fbar_X^{[2]} (x), & x>c_2,          \end{array} \right.
$$
where $c_2$ is choosen such that $\Psi_{\alpha,c_2}$ is continuous. When $\alpha =-\Fbar_Y (x_2)$, the convexity of $\Fbar_Y^{[2]}$ implies $\Psi_{\alpha,c_2}(x)\le \Fbar_Y^{[2]}(x)$ for all $x\le c_2$. Hence
\begin{align*}
   \int_{-\oo}^\oo \[\Psi_{\alpha,c_2}(x)-\Fbar_Y^{[2]}(x) \] \d x 
   & =\int_{-\oo}^{c_2}\[\Psi_{\alpha,c_2}(x)-\Fbar_Y^{[2]}(x)\]\d x
           +\int_{c_2}^{\infty} \[\Fbar_X^{[2]}(x)- \Fbar_Y^{[2]} (x)\] \d x\\
   & \le\int_{c_2}^\oo \[\Fbar_X^{[2]}(x)- \Fbar_Y^{[2]}(x)\]\d x =\Fbar_X^{[3]}(c_2)-\Fbar_Y^{[3]}(c_2)\le 0.
\end{align*}
Morever, 
$$
    \lim_{\alpha\to 0}\int_{-\oo}^\oo \(\Psi_{\alpha,c_2}(x)-\Fbar_Y^{[2]} (x)\) \d x= +\oo. 
$$
Consequently, there exists $\alpha_0\in \[-\Fbar_Y(x_2),0\)$ such that $\int_{-\oo}^\oo \big (\Psi_{\alpha_0,c_2}(x) -\Fbar_Y^{[2]}(x)\big ) \d x=0$. It is easy to see that $\Psi_{\alpha_0,c_2}= \Fbar_Z^{[2]}$ for some random variable $Z$; see Figure \ref{fig-4}. Observe that 
$$
    \lim_{t\to -\oo} \(\Fbar_Z^{[2]}(t)+t\) =\lim_{t\to -\oo} \(\Fbar_Y^{[2]}(t)+t\)= \E Y=0,
$$
which implies that $\E Z=0$. Additionly, by the convexity of $\Fbar_X^{[2]}$, we have $\Fbar_Z^{[2]}(x)\ge \Fbar_X^{[2]} (x)$ for all $x\in \R$. Hence, $X\le_{2\hbox{-}{\rm cx}} Z$. On the other hand, by Proposition \ref{pr-260202} with $n=3$,
$$
   \E [Z^2] -\E [Y^2] = 2 \int_{-\oo}^\oo \(\Fbar_Z^{[2]} (x) -\Fbar_Y^{[2]}(x)\) \d x=0.
$$
Following the same line of reasoning as in Case 1, let $\eta(x)$ be defined by \eqref{eq-260210-1}. Observing that $\eta'(x)= 0$ for $x<x_2$, and $\lim_{x\to -\oo} \eta(x) =0$, we have $\eta(x)= 0$ for all $x\le x_2$. Additionally, for $x\ge c_2$,
$$
   \eta(x)=\int_x^\oo \(\Fbar_X^{[2]}(t)-\Fbar_Y^{[2]} (t)\)\d t =\Fbar_X^{[3]}(x)-\Fbar_Y^{[3]}(x)\le 0
$$
since $X\le_{3\hbox{-}{\rm wcx}} Y$. Morever, $\Fbar_Z^{[2]}(x)$ is linear on the interval $(x_2, c_2)$ and satisties $\Fbar_Z^{[2]}(x_2)\ge \Fbar_Y^{[2]}(x_2)$. By the convexity of $\Fbar_Y^{[2]}$, $\Fbar_Z^{[2]}$ either down-crosses $\Fbar_Y^{[2]}$ exactly once or dominates $\Fbar_Y^{[2]}$ throughout $(x_2,c_2)$. Equivalently, $\eta(x)$ is either decreasing on $(x_2,c_2)$ or first decreasing then increasing on $(x_2,c_2)$. In either case $\eta(x)\le \max\{\eta(x_2),\eta(c_2)\}\le 0$ for all $x\in (x_2,c_2)$. Therefore, $\eta(x)\le 0$ for all $x\in \R$, and thus $Z\le_{3\hbox{-}{\rm cx}} Y$. This completes the proof of the theorem.
\end{proof}

\begin{figure}[htb]
\begin{center}
\begin{tikzpicture}
\tikzset{
		box/.style ={
		circle, 
		minimum width =1.5pt, 
		minimum height =1.5pt, 
		inner sep=1.5pt, 
		draw=black, 
		fill=white
		}
		}
\draw[->,thick] (-5,0) -- (4,0) node[right] {$x$};
\draw[->,thick] (-1,0) -- (-1,4.8) node[above] {$y$};			
\draw[domain=-4:0] plot (\x,{0.2*\x*(\x-1)+0.5});
\draw[domain=0:3] plot (\x,{0.2*exp(-\x)+0.3})node[above] {$\Fbar_Y^{[2]}$};
\draw[domain=-4:-2,color=red] plot (\x,{-\x});
\draw[domain=-2:2,color=red] plot (\x,{-0.5*\x+1})node[above] {$\Fbar_X^{[2]}$};
\draw[domain=-3.7:0,color=blue] plot (\x,{-0.8*\x+1})node[above] {$\Fbar_Z^{[2]}$};
\node at (-3.7,3.95) { $\bullet$};
\node at (0,1) { $\bullet$};
\draw  [dash pattern={on 1.5pt off 2pt}] (0,0) -- (0,1);
\draw  [dash pattern={on 1.5pt off 2pt}] (-3.7,0) -- (-3.7,3.95);
\draw  [dash pattern={on 1.5pt off 2pt}] (-3.2,0) -- (-3.2,3.2);
\node at (0,-0.3) {$c_2$};
\node at (-3.7,-0.3) {$x_2$};
\node at (-3.2,-0.3) {$x_0$};
\end{tikzpicture} 
\end{center}
\caption{The plots of $\Fbar_X^{[2]}$, $\Fbar_Y^{[2]}$ and $\Fbar_Z^{[2]}$. }
 \label{fig-4}
\end{figure}

Theorem \ref{th-260207} states that any weak $3$-convex order shift can be decomposed into a first shift in the $2$-convex order followed by a second shift in the $3$-convex order. Inspired by Theorem \ref{th-260204}, one naturally wonders whether there exists another version of the weak $3$-convex order separation result for Theorem  \ref{th-260207}, namely that any weak $3$-convex order shift can be decomposed into a $3$-convex order shift followed by a $2$-convex order shift. The answer to this question is negative, as can be demonstrated by the following counterexample.

\begin{example}
 {\rm
Let $X$ be a discrete random variable with probability mass function given by 
$$
   \P\(X=-\frac {4}{4e^2-1}\) =\frac {4e^2-1}{4e^2} = 1-\P(X=4),
$$
and $Y$ follow the standard Laplace distribution, i.e., $Y$ has the density function $f_Y(y)=e^{-|y|}/2$ for $y\in\R$. A straightforward computation yields that
\begin{align*}
  \Fbar_X^{[2]}(t) & =\E [(X-t)_+] =\left\{\begin{array}{ll}
     -t, & t\le -\dfrac {4}{4e^2-1},\\[5pt]
     \dfrac {1}{4e^2} (4-t), & -\dfrac {4}{4e^2-1}<t<4,\\[3pt]
     0, & t\ge 4; \end{array}\right.  \\[5pt]
  \Fbar_X^{[3]}(t) & =\frac {1}{2}\E \[(X-t)^2_+\] =\left\{\begin{array}{ll}
     \dfrac {t^2}{2} +\dfrac {8}{4e^2-1}, & t\le -\dfrac {4}{4e^2-1},\\[8pt]
     \dfrac {1}{8e^2} (t-4)^2, & -\dfrac {4}{4e^2-1}<t<4,\\[3pt]
     0, & t\ge 4; \end{array}\right.
\end{align*}
and
\begin{align*}
  \Fbar_Y^{[2]}(t) & =\E [(Y-t)_+] =\left\{\begin{array}{ll}
     \dfrac {1}{2} e^t -t, & t\le 0,\\[8pt]
     \dfrac {1}{2} e^{-t}, & t>0; \end{array}\right.  \\[5pt]
  \Fbar_Y^{[3]}(t) & =\frac {1}{2}\E \[(Y-t)^2_+\] =\left\{\begin{array}{ll}
     1+\dfrac {t^2}{2} -\dfrac {1}{2} e^t, & t\le 0,\\[5pt]
     \dfrac {1}{2} e^{-t}, & t>0. \end{array}\right.
\end{align*}
One can easily verify that $\E X=\E Y=0$, $\E [X^2]<\E [Y^2]$ and $\Fbar_X^{[3]}(t)\le \Fbar_Y^{[3]}(t)$ for all $t\in\R$. Thus, $X\le_{3\hbox{-}{\rm wcx}} Y$. Assume that there exists $Z\in L^2$ such that $X\le_{3\hbox{-}{\rm cx}} Z\le_{2\hbox{-} {\rm cx}} Y$. Then $\Fbar^{[3]}_X(t)\le \Fbar_Z^{[3]}(t)$ and $\Fbar^{[2]}_Z(t)\le \Fbar_Y^{[2]}(t)$ for all $t\in\R$. Moreover, $\E Z=0$ and $\E [Z^2]=\E [X^2]$. Suppose there exists $t_0>2$ such that $\Fbar^{[2]}_Z(t_0) < \Fbar_Y^{[2]}(t_0)$. By the continuity of $\Fbar_X^{[2]}, \Fbar_Y^{[2]}$ and $\Fbar^{[2]}_Z$, we have
$$
   \Fbar^{[3]}_Z(2)=\int^\oo_2 \Fbar^{[2]}_Z(x)\d x < \int^\oo_2\Fbar^{[2]}_Y(x)\d x =\Fbar^{[3]}_Y(2)=\Fbar^{[3]}_X(2),
$$
which leads to a contradiction. Thus, by the continuity of $\Fbar^{[2]}_Y$ and $\Fbar^{[2]}_Z$, we have 
$$
   \Fbar^{[2]}_Z(t)=\Fbar^{[2]}_Y(t)\ \hbox{for all}\ t\ge 2.
$$
Consequently, by the convexity of $\Fbar^{[2]}_Z$, we have
\begin{align*}
  \Fbar^{[2]}_Z(t) &\ge D^+\Fbar^{[2]}_Z(2) (t-2) + \Fbar^{[2]}_Z(2) 
    = D^+\Fbar^{[2]}_Y(2) (t-2) + \Fbar^{[2]}_Y(2) \\
   & =  -\frac {1}{2e^2} (t-2)+\frac {1}{2e^2} = \frac {1}{2e^2} (3-t).
\end{align*}
Additionally, since $\Fbar_Z^{[2]}(t) =\E [(Z-t)_+]\ge \E [Z-t]=-t$, we obtain
$$
   \Fbar_Z^{[2]}(t) \ge \max \left\{-t,\ \frac {1}{2e^2} (3-t)\right \},\quad t<2.
$$
So, by Proposition \ref{pr-260202} with $n=3$, we have
\begin{align*}
  \frac {1}{2} \big (\E [Z^2] -\E [X^2]\big ) 
  &= \int^2_{-\oo} \(\Fbar_Z^{[2]}(t) -\Fbar_X^{[2]}(t)\) \d t + \Fbar_Z^{[3]}(2) -\Fbar_X^{[3]}(2) \\ 
  & \ge \int^2_{-\oo} \(\Fbar_Z^{[2]}(t) -\Fbar_X^{[2]}(t)\) \d t\\ 
  & \ge \int^2_{-4/(4e^2-1)} \[\frac {1}{2e^2} (3-t) +\frac {1}{4e^2} (t-4)\]\d t \\
  &= \frac {1}{2e^2} \(\frac {4e^2+1}{4e^2-1}\)^2 >0,  
\end{align*}
which contradicts $\E [Z^2]=\E [X^2]$. Therefore, given $X\le_{\rm 3\hbox{-}wcx} Y$, there does not exist any random variable $Z\in L^2$ such that $X\le_{\rm 3\hbox{-}cx} Z\le_{\rm 2\hbox{-}cx} Y$.    }
\end{example}

\section{Discussion}

In this paper, we revisit the proofs for the separation theorem of the increasing convex order, and establish the separation theorem for the weak $3$-convex order over all real-valued random variables via a constructive approach based on higher-order integrated survival functions. A counterexample is presented to prove that the separation of weak 3-convex order is only one-way valid, that is, a weak $3$-convex order shift can not be decomposed into a $3$-convex order followed by a $2$-convex order. 

\cite{DLM14} attempted to generalize the separation theorem to weak $n$-convex orders with $n\ge 4$, but the corresponding proof remains incomplete. In view of the consistent structural features across low-order and high-order weak convex orders, we have the following conjecture:

\begin{conjecture}
Given two random variables $X,Y\in L^{n-1}$ for $n>2$, $X\le_{n\hbox{-}{\rm wcx}} Y$ if and only if there exists a random variable $Z\in L^{n-1}$ such that $X\le_{(n-1)\hbox{-}{\rm cx}} Z\le_{n\hbox{-}{\rm cx}} Y$.
\end{conjecture}


Similar to the notion of $n$-th degree $m$-mean preserving stochastic dominance \citep{Liu14,WW25}, we can define $n$-th degree $m$-preserving convex order, denoted as $(n,m)$-CX order, where $m$ and $n$ are integers such that $0\le m<n$. For $X, Y\in L^{n-1}$, $X$ is said to be smaller than $Y$ in the sense of $(n,m)$-CX order, written $X\le_{(n,m)\hbox{-}{\rm cx}} Y$, if $\Fbar^{[n]}_X (x)  \le  \Fbar^{[n]}_Y (x)$ for all $x\in\R$, and $\E [X^k]=\E [Y^k]$ for $k=1, \ldots, m$. The weak $n$-convex order and $n$-convex order are particular cases of $(n,m)$-CX order with $m=n-2$ and $m=n-1$, respectively.

\begin{conjecture}
Let $X,Y\in L^{n-1}$ and $n>m\ge 1$.
\begin{itemize}
  \item[{\rm (1)}] $X\le_{(n,m)\hbox{-}{\rm cx}} Y$ if and only if there exist andom variable $Z_1,\ldots, Z_{n-m-1}\in L^{n-1}$ such that 
    $$
        X\le_{(m+1,m)\hbox{-}{\rm cx}} Z_1\le_{(m+2,m+1)\hbox{-}{\rm cx}} \cdots \le_{(n-1,n-2)\hbox{-}{\rm cx}} Z_{n-m-1}  \le_{(n,n-1)\hbox{-}{\rm cx}}  Y.
    $$  
  \item[{\rm (2)}] For $n>k>m$, $X\le_{(n,m)\hbox{-}{\rm cx}} Y$ if and only if there exist andom variable $Z\in L^{n-1}$ such that 
    $$
        X\le_{(k,m)\hbox{-}{\rm cx}} Z \le_{(n,k)\hbox{-}{\rm cx}} Y.
    $$  
\end{itemize}
\end{conjecture}

\section*{Funding}

Z. Zou gratefully acknowledges financial support from National Natural Science Foundation of China (No. 12401625), and the Fundamental Research Funds for the Central Universities (No. WK2040000108). T. Hu would like to acknowledge financial support from National Natural Science Foundation of China (No. 72332007, 12371476).

\section*{Disclosure statement}

No potential conflict of interest was reported by the authors.

\end{document}